\DeclareMathAlphabet{\pazocal}{OMS}{zplm}{m}{n}
\numberwithin{equation}{section}
\numberwithin{equation}{subsection}
\theoremstyle{plain}
\newtheorem{theorem}[equation]{Theorem}
\newtheorem{lemma}[equation]{Lemma}
\newtheorem{corollary}[equation]{Corollary}
\newtheorem{clm}[equation]{Claim}
\theoremstyle{definition}
\newtheorem{example}[equation]{Example}
\newtheorem{remark}[equation]{Remark}
\newtheorem{definition}[equation]{Definition}
\newcommand{\calO}{{\mathcal O}}
\newcommand{\cF}{{\mathcal F}}
\newcommand{\C}{{\calc}}
\newcommand{\cG}{\mathcal{G}}
\newcommand{\calI}{\mathcal{I}}
\newcommand{\ord}{{\rm ord}}
\newcommand{\val}{{\rm val}}
\newcommand{\mult}{{\rm mult}}
\newcommand{\Z}{\mathbb{Z}}
\newcommand{\calo}{{\mathcal O}}
\def\C{\mathbb C}
\def\Z{\mathbb Z}
\newcommand{\hookuparrow}{\mathrel{\rotatebox[origin=c]{90}{$\hookrightarrow$}}}
\author{Andr\'as N\'emethi}
\thanks{The first author is partially supported by  ``\'Elvonal (Frontier)'' Grant KKP 144148}
\address{Alfr\'ed R\'enyi Institute of Math.,
Re\'altanoda utca 13-15, H-1053, Budapest, Hungary \newline
 \hspace*{3mm} ELTE - Univ. of Budapest, Dept. of Geo.,
 P\'azm\'any P\'eter s\'et\'any 1/A, 1117, Budapest, Hungary \newline \hspace*{3mm}
  BBU - Babe\c{s}-Bolyai Univ., Str, M. Kog\u{a}lniceanu 1, 400084 Cluj-Napoca, Romania
 \newline \hspace*{3mm}
BCAM - Basque Center for Applied Math.,
Mazarredo, 14 E48009 Bilbao, Basque Country, Spain}
\email{nemethi.andras@renyi.hu }
\author{Willem Veys}
\thanks{The second author is partially supported by KU Leuven grant GYN-E4282-C16/23/010}
\address{KU Leuven, Departement Wiskunde, Celestijnenlaan 200B, 3001 Leuven, Belgium}
\email{wim.veys@kuleuven.be}
\title{Filtrations associated with singularities}
\begin{document}

\keywords{}
\subjclass[2010]{Primary. 32S05, 32S10, 32S25;
Secondary. 14Bxx}

\begin{abstract} We fix a complex analytic  normal singularity germ $(X,o)$ of dimension $\geq 2$
and a (not necessarily irreducible)
  reduced Weil divisor $(S,o)\subset (X,o)$.
The embedded resolution of the pair determines  a multi-index  filtration of the local ring $\calO_{X,o}$,
 which measures the embedded geometry of the pair. Furthermore, from the (induced) resolution of $(S,o)$ we also consider
  a multi-index  filtration  associated with $(S,o)$. This latter one can be
 lifted to a filtration of  $\calO_{X,o}$  too. The main result proves that the second
 filtration of
  $\calO_{X,o}$ can be realized as a `limit' filtration of the first one (if we blow up certain centers sufficiently many times).
\end{abstract}

\maketitle

\linespread{1.2}

Filtrations of local algebras and their Poincar\'e series  constitute a very important  tool in the study of (local) rings and (local) singularity theory.
Already in low dimensional cases the theory is far from easy. For curves such
a filtration appears as follows. If $(S,o)$
is an irreducible complex analytic  curve singularity and $n:\widetilde{S}\to S$ is its normalization,  then the local ring $\calo_{S,o}$  has a natural valuative filtration given by the
inclusion  $\calo_{S,o}\subset \calo_{\widetilde{S},o}=\mathbb{C}\{t\}$  and
valuation $f\mapsto {\rm ord}_t (n^*f)$. Already when $S$ is a plane curve, the corresponding Poincar\'e series of the filtration of $\calo_{S,o}$ is a crucial invariant,
e.g. it determines the embedded topological type (or, the equisingularity type) of
the pair $(S,o)\subset (\mathbb{C}^2,o)$, cf.  \cite{CDG}, see also \cite{DGN,GorNem2015,NBook}.
For non-irreducible (but reduced) curves we can consider the valuations associated with all the components. The corresponding multivariable Poincar\'e series
is more complicated, its combinatorics is still not completely understood. (For certain topological connections, see \cite{CDG,DGN,Gorsky,GorNem2015,NBook,N_f}.)
There are few results in arbitrary dimension, we mention e.g. \cite{L1,L2,VV}.

Already in the case of the isolated  plane curves  we can notice the parallelism
with the embedded geometry --- what will be our main target.
For instance,  for plane curves, the valuative filtration associated with the normalization can be recovered from the embedded geometry as well.
Indeed,
let $h_1,\dots,h_r$ be irreducible elements in the local ring $\calO_{\C^2,o}$ at the origin of the complex plane (all different up to invertible elements of the local ring), and put $h=\prod_{j=1}^r h_j$. They determine the plane branches $S_1,\dots, S_r$ and the reduced plane curve germ $S$, respectively.
Each $S_j$ determines a (semi)valuation $v_{S_j}$ on $\calO_{\C^2,o}$, given by  $v_{S_j}(f) := \mult_{o} ({\rm div}(f), S_j) = \dim_\C  \calO_{\C^2,o}/(f,h_j)
\in \Z_{\geq 0} \cup \{+\infty\}$ for $f\in \calO_{\C^2,o}$, where $\mult_o (\cdot,\cdot)$ denotes intersection multiplicity, see Subsection \ref{divval}.
(Thus $v_{S_j}(f) = +\infty$ if and only if $h_j$ divides $f$.)
In fact,
we use the same notation for the induced (semi)valuation on  the local ring $\calO_{S,o} = \calO_{\C^2,o}/(h)$, given by $v_{S_j}(\bar{f}):= v_{S_j}(f)$, where $\bar{f}$ is the image of $f\in \calO_{\C^2,o}$ in $\calO_{S,o}$.
The point is that for plane curves the above `curve valuation' and the valuation given by normalization coincide.

For singularities in higher dimensions, one usually introduces filtrations via  divisorial valuations or by the specific forms of the equation of the singularity
(this second case can be exemplified e.g. by
singularities associated with fixed Newton diagrams). In this note
we focus on the first general case  of divisorial filtrations: any fixed irreducible exceptional divisor  of a resolution provides a valuation and a filtration.

One of the important problems here is to understand the relative case, when one tries to compare two natural filtrations. Let us exemplify this by the case of embedded curves.
Let us fix a normal surface singularity $(X,o)$ and an irreducible  Weil divisor $(S,o)\subset (X,o)$ on it.
Then we can consider an embedded resolution $\widetilde{X}$ of the pair $(S,o)\subset (X,o)$ and the irreducible exceptional curve $E_S$ which intersects the strict transform
$\widetilde{S}$ of $S$. Then $E_S$ provides a valuation and filtration on $\calo_{X,o}$. The challenge is to compare this with the valuative filtration of $\calO_{S,o}$ (given by its normalization).
In fact, this second filtration can be lifted to a filtration of $\calo_{X,o}$ via the
projection $\calO_{X,o}\to \calo_{S,o}$.
Usually the two filtrations of $\calO_{X,o}$ do not coincide. However, if we blow up
$\widetilde{X}$ sufficiently many times, first in the point $E_S\cap \widetilde{S}$, and further always in the new intersection point of $\widetilde{S}$ with the exceptional locus, the filtrations associated with these embedded resolutions have a limit, which will coincide with  the lifted filtration obtained from the filtration of  $\calO_{S,o}$. This is in some sense remarkable;
this identification connects two types of geometry:
the abstract analytic geometry of the curve $(S,o)$ and the embedded geometry of the pair
$(S,o)\subset (X,o)$.
Such a correspondence in this low dimensional setting was already noticed in \cite{N},
for plane curves see also \cite{DGN}.\\

 In this article we target the most general setting:

$\bullet$ \  we will work in the local complex analytic category,

$\bullet$ \  the ambient space $(X,o)$  is any normal singularity of dimension $\geq 2$,

$\bullet$ \  the embedded subspace $(S,o)$  is an  arbitrary, not necessarily irreducible,
  reduced Weil divisor.\\

 For the details of the setup and the precise  definition of the filtrations, see Section \ref{result}.
 The main result is formulated in Theorem \ref{th:1}. It identifies the limit filtration, associated with the embedded situation $(S,o)\subset (X,o)$, with the abstract filtration of the Weil divisor $(S,o)$.
 As a consequence, we have the identification of the corresponding Poincar\'e series as well,
 a fact which is exemplified in Section \ref{cor}.  We provide some examples in Section \ref{s:ex}.

 We emphasize that the result is new already in the case of a smooth ambient space $(X,o)=(\mathbb{C}^n,o)$ and irreducible $(S,o)$.

\section{Preliminaries}\label{prelim}

We recall some notions about divisorial valuations and intersection numbers, in particular in the analytic setting.
Note that in the analytic setting there is no analogue of the algebro-geometric notion of generic point of an irreducible variety, hence no \lq global\rq\ instance of a local ring of a variety along a positive dimensional subvariety. 

\subsection{Divisorial valuations}\label{divval}
 Let $M$ be a connected analytic manifold and $V$ an irreducible analytic hypersurface in $M$. For  a point  $P\in V$, let $h$ be a local defining function for $V$ in a neighbourhood of $P$.
For any nonzero analytic function $g$ on $M$, defined near $P$, the {\em (vanishing) order of $g$ along $V$ at $P$}, denoted $\ord_{V,P}(g)$, is the integer $a$ such that we can write $g$  in the (regular) local ring  $\calO_{M,P}$ as $g=h^af$,  with $f$ coprime to $h$.

Since coprime elements of $\calO_{M,P}$ stay coprime in nearby local rings (see e.g. \cite[Proposition page 10]{GH}), we have that $\ord_{V,P}(g)$ is independent of $P$.
Hence we can define $\val_{V}(g)$, the {\em valuation  of $g$ along $V$}, as  $\ord_{V,P}(g)$, for any $P \in V$.  (We put $\val_{V}(0):= +\infty$.)

In the algebraic category this is just the classical divisorial valuation on the function field of the variety $M$ induced by the prime divisor $V$.

\subsection{Intersection multiplicities}\label{intmult}
Let $M$ be an $n$-dimensional connected analytic manifold and $V, W$  analytic hypersurfaces in $M$. For an $(n-2)$-dimensional irreducible subvariety $Z$ of $V\cap W$, we denote by $\mult_Z(V,W)$ the {\em  intersection multiplicity of\, $V$ and $W$ along $Z$}; see e.g. \cite{GH} or \cite{D}. 

In particular, if $n=2$ and thus $Z$ is a point, and the germs of $V$ and $W$ at $Z$ are given as the divisors of $f$ and $g$ in  $\calO_{M,Z}$, respectively, then $\mult_Z(V,W)= \dim_\C  \calO_{M,Z}/(f,g)$.

In general, for all points $P\in Z$ outside a proper analytic subset of $Z$ and any two dimensional submanifold $H$ of $M$ intersecting $Z$ transversally at $P$, we have that the intersection multiplicity $\mult_P(V\cap H, W\cap H)$ (considered in $H$) does not depend on $P$ or $H$. Then $\mult_Z(V,W)$ is equal to this number.
 In the algebraic category this is   $\dim _{K} \calO_{M,Z}/(f,g)$, where $\calO_{M,Z}$ is the local ring of $M$ at (the generic point of) $Z$ with residue field $K$, and the germs of $V$ and $W$ at $Z$ are given as the divisors of $f$ and $g$ in  $\calO_{M,Z}$, respectively.

Note that $\mult_Z(V,W) \in \Z_{\geq 0} \cup \{+\infty\}$, with $\mult_Z(V,W)=+\infty$ if and only if $V$ and $W$ have a common component containing $Z$.


\section{The main result}\label{result}

\subsection{The setup} We fix a germ $(X,o)$ of a normal complex analytic singularity
of dimension $n\geq 2$ and we consider
a reduced (Weil) divisor $(S, o)\subset (X,o)$, with irreducible components (i.e., prime divisors) $S_1,\dots,S_r$.

An important special case is when $(X,o)$ is a complex manifold. Then each $S_j$ is the zero set of an irreducible $h_j$ in the (regular) local ring $\calO_{X,o}$, and $S$ is the zero set of $h=\prod_{j=1}^r h_j$.

\smallskip
Next, we fix an embedded resolution $\pi_0:X_0\to X$ of the pair $(S,o)\subset (X,o)$. That is, $X_0$ is an analytic manifold, $\pi_0$ is a bimeromorphic morphism, all irreducible components $E_i, i\in J,$ of its exceptional locus $E$ and all strict transforms $\widetilde{S_j}, 1\leq j\leq r,$  are codimension one submanifolds of $X_0$, such that $(\cup_{i\in J}E_i)\cup (\cup_{j=1}^r \widetilde{S_j})$ is a simple normal crossing divisor. Note that the strict transform $\widetilde{S}$  of $S$ is $\cup_{j=1}^r \widetilde{S_j}$.
$$ \begin{array}{ccc}
X_0 & \stackrel{\pi_0}{\longrightarrow} & X \\
\hookuparrow & &\hookuparrow\\
\widetilde{S} & \stackrel{p}{\longrightarrow}& S\\
\hookuparrow & &\hookuparrow\\
\widetilde{S_j} & \stackrel{p_j}{\longrightarrow}& S_j

 \end{array}$$
We may suppose $\pi_0$ is such that for all $E_i$ we have that $E_i \cap \widetilde{S}$ is either empty or irreducible; note that this implies that all $\widetilde{S_j}$ are disjoint.
We set $C_i:=E_i\cap \widetilde{S}$ whenever this intersection is nonempty, say for $i\in I\subset J$. (So then $C_i=E_i\cap \widetilde{S_j}$ for exactly one $j$.)

 \begin{remark}

\begin{enumerate}
\item The $r$ connected components of $\cup_{i\in I} C_i$ are precisely the intersections of the exceptional locus with a fixed $\widetilde{S_j}$.  In fact, the restriction  $p_j=\pi_0|_{\widetilde{S_j}}$ is a log resolution of $S_j$, with exceptional locus such a connected component $E\cap \widetilde{S_j}$.

\item  Evidently, the collection $\{C_i\}_{i\in I}$ depends on the choice of $\pi_0$ (except for $n=2$, when the curves $E_i$ depend on the choice of $\pi_0$, but the points $C_i$ not).

 \item We can restrict the construction below to some subset $\{C_i\}_{i\in I'}$, for certain
  $I'\subset I$, which is (for example) intrinsically associated with $p$, or even to $S$ itself.
 A typical example is the case $n=3$ and $S$ irreducible, where the curves $\{C_i\}_{i\in I'}$ are the (irreducible)
 exceptional curves in the minimal good resolution of $S$, or those exceptional curves which appear in any fixed embedded resolution of $S\subset X$.
\end{enumerate}
 \end{remark}

 Next, we proceed with the following construction.
 We blow up in $X_0$ the  centers $\{C_i\}_{i\in I}$  one by one, and in this way we obtain $X_1$ with
 new exceptional components $\{E_{i,1}\}_{i\in I}$ and strict transforms (still denoted by)
 $\widetilde{S}$ and $\widetilde{S_j}$. We will use the same notation $C_i$ for the intersection
 $\widetilde{S}\cap E_{i,1}$ as well.

We  continue further in this way: we blow up $X_1$ along the centers $\{C_i\}_{i\in I}$, and we obtain $X_2$ with new exceptional components $\{E_{i,2}\}_{i\in I}$, etc. That is, for any $m\in \Z_{> 0}$, we obtain after $m$ blow-ups the manifold $X_m$  with new exceptional divisors
  $\{E_{i,m}\}_{i\in I}$ and  intersections
 $C_i=\widetilde{S}\cap E_{i,m}$ on $X_m$.
 We write $\pi_m$ for the modification $X_m\to X$.
 $$ \begin{array}{cccccccccc}
\longrightarrow & X_m & \longrightarrow & \ldots   & \longrightarrow & X_1  & \longrightarrow &
X_0 & \stackrel{\pi_0}{\longrightarrow} & X \\ &
\hookuparrow & & & &\hookuparrow & & \hookuparrow &  &\hookuparrow  \\
\stackrel{=}{\longrightarrow} &
\widetilde{S} & \stackrel{=}{\longrightarrow} & \ldots &  \stackrel{=}{\longrightarrow} &
\widetilde{S} & \stackrel{=}{\longrightarrow} &
\widetilde{S} & \stackrel{p}{\longrightarrow}& S \\&
\hookuparrow & & & &\hookuparrow & & \hookuparrow &  &\hookuparrow  \\
\stackrel{=}{\longrightarrow} &
\widetilde{S_j} & \stackrel{=}{\longrightarrow} & \ldots &  \stackrel{=}{\longrightarrow} &
\widetilde{S_j} & \stackrel{=}{\longrightarrow} &
\widetilde{S_j} & \stackrel{p_j}{\longrightarrow}& S_j
\end{array}$$

\begin{remark} (1)
When $n \geq 3$, the spaces $X_1, \ldots , X_m, \ldots $,
and also the (isomorphism class of the) $E_{i,m}$ depend on the order in which we blow up the centers $\{C_i\}_{i\in I}$ in $X_m$, at each step.
However, the divisorial valuations $\val_{E_{i,m}}$ do not depend on this order.

(2)
Note that in this analytis setup
the centres of blow-up $C_i$ and exceptional components can be not projective (and not algebraic).

(3) On the other hand, we can replace our analytic setup, and consider all the constructions in the
(complex) algebraic category.
When  $(S, o)\subset (X,o)$ are algebraic, we assume that $X_0$ and $\pi_0$ are algebraic.
\end{remark}

\subsection{The filtrations}
Let $\calI(S)$ denote the ideal of $S$ in $\calO_{X,o}$. We denote by  $\bar{f}\in \calO_{S,o}=\calO_{X,o}/\calI(S)$  the residue class of $f\in \calO_{X,o}$.
For any $\bar{f}\in\calO_{S,o}$, we can consider its valuation $\val_{C_i}(\bar{f})$.  That is, we consider (the pullback of) $\bar{f}$ as function on $\widetilde{S_j}\subset \widetilde{S}$, where  $C_i:=E_i\cap \widetilde{S_j}$, and then $\val_{C_i}(\bar{f})$ is the valuation of $\bar{f}$ along $C_i$ (considered on the manifold $\widetilde{S_j}$) as in Subsection \ref{divval}.

Note that $\val_{C_i}(\bar{f})=+\infty$ if and only if $f$ vanishes on $S_j$.  When $S=S_j$ is irreducible, this means just that $\bar{f}=0$ in $\calO_{S,o}$.
(So in general $\val_{C_i}$ is only a semivaluation on $\calO_{S,o}$; it is a valuation if and only $S$ is irreducible.)

Similarly, for any $f\in\calO_{X,o}$, we can consider its valuation $\val_{E_{i,m}}(f)$.

\medskip
We consider the following multi-index filtrations on $\calO_{S,o}$ and $\calO_{X,o}$.
For  $\ell= (\ell_i)_{i\in I}$ ranging over $ (\Z_{\geq 0})^{|I|}$, we define on $\calO_{S,o} $ the (decreasing) filtration
$$\overline{\cG_I}(\ell):= \{ \bar{f}\in \calO_{S,o}\ :\, \val_{C_i}(\bar{f})\geq \ell_i \ \ \mbox{for
all $i\in I$}\ \}.$$
Its lifting to $\calO_{X,o}$ is the filtration
$$\cG_I(\ell):= \{ f\in \calO_{X,o}\ :\, \val_{C_i}(\bar{f})\geq \ell_i \ \ \mbox{for
all $i\in I$}\ \}.$$


\noindent
Similarly, for any $m\geq 0$, we define on $\calO_{X,o} $ the decreasing filtration
$$\cF_{I,m}(\ell):= \{ f\in \calO_{X,o}\ :\, {\rm val} _{E_{i,m}}(f)\geq \ell_i \ \ \mbox{for
all $i\in I$}\ \}.$$
Note that one has the natural inclusions of filtrations $\cF_{I,m}\subset \cF_{I, m+1}$ (that is,
 $\cF_{I,m}(\ell)\subset \cF_{I, m+1}(\ell)$ for any $m$ and $\ell$).

\subsection{Limit filtrations}

The main result of this article is the following statement.
It is a generalization of \cite[Corollary 2]{DGN} and \cite[Section 9]{N}
to the present very general setting.

\begin{theorem}\label{th:1}
(1) For each $\ell \in (\Z_{\geq 0})^{|I|}$  there exists $M\in \Z_{\geq 0}$ such that
$$\cF_{I,m}(\ell)=\cF_{I, M}(\ell) \ \ \mbox{for any $m\geq M$}. $$
In particular, the limit filtration $\lim_m \cF_{I,m}$, given by
$(\lim_m \cF_{I,m})(\ell):= \lim_{m\to \infty} (\cF_{I,m}(\ell))$, is a well defined filtration on $\calO_{X,o}$.

(2) We have the equality of filtrations
 $$\lim_m \cF_{I,m}=\cG_I.$$
\end{theorem}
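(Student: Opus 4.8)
The plan is to compare, for each fixed multi-index $\ell$ and each blow-up level $m$, the two filtration pieces $\cF_{I,m}(\ell)$ and $\cG_I(\ell)$, and to show that they stabilize to the latter. The geometric heart of the matter is a single explicit computation: given $f\in\calO_{X,o}$, one wants to read off $\val_{E_{i,m}}(f)$ from the restriction $\bar f$ on $\widetilde{S_j}$ (where $C_i=E_i\cap\widetilde{S_j}$) plus a bounded correction. Concretely, I would fix $i\in I$, work near the point (or subvariety) $C_i\subset X_0$, and choose local coordinates in which $E_i=\{x=0\}$ and $\widetilde{S_j}=\{y=0\}$ (after our normalization $E_i\cap\widetilde S$ is irreducible and the divisor is SNC, so this is legitimate, with the remaining coordinates playing the role of parameters along $C_i$). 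The first blow-up along $C_i$ introduces $E_{i,1}$ with $\val_{E_{i,1}}=\val_{E_i}+\val_{\widetilde{S_j}}$ on $\calO_{X_0}$; inductively, $\val_{E_{i,m}}=\val_{E_i}+m\cdot\val_{\widetilde{S_j}}$ when measured on functions pulled back from $X_0$. The key local lemma I would isolate is: for $f$ with $\val_{\widetilde{S_j}}(f)=0$ (i.e. $f$ does not vanish on $S_j$), writing $f=\sum_{a\ge 0} c_a(x,z)\,y^a$ with $c_0$ the "restriction to $\widetilde{S_j}$", one has $\val_{E_{i,m}}(f)=\min_{a\ge 0}\bigl(\val_{E_i}(c_a)+a\cdot\? \bigr)$ — more precisely, after the $m$-th blow-up the relevant weighted minimum is dominated, once $m$ is large, by the $a=0$ term, which is exactly $\ord_{C_i}(\bar f)=\val_{C_i}(\bar f)$. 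This is the quantitative statement that "blowing up enough times isolates the tangential contribution".

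With that local lemma in hand, part (1) follows: for a fixed $\ell$, $f\in\cF_{I,m}(\ell)$ imposes finitely many inequalities $\val_{E_{i,m}}(f)\ge\ell_i$; since $\val_{E_{i,m}}$ is (on the relevant class of functions) eventually an increasing affine function of $m$ with slope $\val_{\widetilde{S_j}}(f)\ge 1$ whenever $f$ vanishes on $S_j$, and is eventually \emph{constant} in $m$ — equal to $\val_{C_i}(\bar f)$ — whenever $f$ does not vanish on $S_j$, the set of $f$ satisfying all these inequalities can only grow as $m$ increases, and it can grow only finitely often because each $\val_{C_i}(\bar f)$ either equals $+\infty$ or is a fixed nonnegative integer independent of $m$. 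Thus there is an $M$ beyond which $\cF_{I,m}(\ell)$ no longer changes. I would phrase the stabilization by noting that $\cF_{I,m}(\ell)$ is an increasing chain of $\calO_{X,o}$-submodules sandwiched inside the Noetherian-type object $\cG_I(\ell)$ (see the next paragraph for why it is contained there), so it must stabilize; one then only needs an effective $M$, which the local lemma supplies.

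For part (2), the two inclusions: "$\subset$ for every $m$" is the easy direction. If $f\in\cF_{I,m}(\ell)$ then $\val_{E_{i,m}}(f)\ge\ell_i$ for all $i$; restricting to $\widetilde{S_j}$ kills the $E_i$-component and only the tangential jet survives, giving $\val_{C_i}(\bar f)\ge \val_{E_{i,m}}(f)-(\text{contribution of }E_i\text{ after }m\text{ blow-ups})$. A cleaner way: since $\widetilde S$ is a component of the total transform and the centers $C_i$ lie in $\widetilde S$, the strict transform $\widetilde S$ meets $E_{i,m}$ exactly along the current $C_i$, hence $\val_{E_{i,m}}(f)\le \val_{C_i}(\bar f)$ always (the restriction can only have larger or equal order, with equality "at infinity"), so $\cF_{I,m}(\ell)\subseteq\cG_I(\ell)$ and therefore $\lim_m\cF_{I,m}(\ell)\subseteq\cG_I(\ell)$. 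Conversely, given $f$ with $\val_{C_i}(\bar f)\ge\ell_i$ for all $i$, the local lemma shows that for $m$ large enough (depending on $f$ and on $\ell$, which is all we need since the limit is a union over $m$) the non-tangential contributions to $\val_{E_{i,m}}(f)$ have been pushed above $\ell_i$ as well, so $\val_{E_{i,m}}(f)\ge\ell_i$, i.e. $f\in\cF_{I,m}(\ell)\subseteq(\lim_m\cF_{I,m})(\ell)$. Running this over all $i\in I$ simultaneously (take the max of the finitely many $m_i$) gives $\cG_I(\ell)\subseteq(\lim_m\cF_{I,m})(\ell)$, completing the equality.

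\textbf{Main obstacle.} The delicate point is the local lemma in the higher-dimensional, purely analytic setting: making precise "the tangential jet $\bar f = c_0$ eventually dominates $\val_{E_{i,m}}(f)$" requires understanding how the successive blow-ups along the moving center $C_i=\widetilde S\cap E_{i,m}$ transform a given function, and doing so uniformly along $C_i$ (which may be non-algebraic and positive-dimensional, so one cannot pass to a generic point). The clean bookkeeping is: $\val_{E_{i,m}}$ on $\calO_{X,o}$ equals the $(x^m y)$-adic order after pulling $f$ up to the chart where $E_{i,m}=\{x=0\}$; expanding $f=\sum_a c_a(x,z)y^a$ and tracking that the $a$-th term contributes $\val_{E_i}(c_a)+a$ to $\val_{E_{i,m-1}}$ but $\val_{E_i}(c_a)+a$ stays the same while the $a=0$ term contributes $\val_{E_i}(c_0)+0=\val_{C_i}(\bar f)$, one sees the minimum is achieved at $a=0$ as soon as $m$ exceeds $\val_{C_i}(\bar f)-\min_{a\ge 1}\val_{E_i}(c_a)$. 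I would isolate and prove exactly this inequality as a lemma, checked on a single affine chart of each blow-up and then globalized along $C_i$ using that the construction is the same at every point of $C_i$ (Remark after the setup already records that $\val_{E_{i,m}}$ is independent of the order of blow-ups, which is the coherence one needs); the rest of the argument is the soft sandwiching described above.
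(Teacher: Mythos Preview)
Your approach is correct and follows the same skeleton as the paper: work in SNC coordinates near $C_i$ with $E_i=\{x=0\}$ and $\widetilde{S_j}=\{y=0\}$, show $\val_{E_{i,m}}(f)\le\val_{C_i}(\bar f)$ for all $m$ (hence $\cF_{I,m}\subset\cG_I$), and show the gap closes for $m$ large. The organizational difference is that the paper replaces your power-series bookkeeping by a single intersection-multiplicity identity,
\[
\val_{C_i}(\bar f)\;=\;\mult_{C_i}\bigl(\mathrm{div}(\pi_m^*f),\,\widetilde{S_j}\bigr)
\;=\;\val_{E_{i,m}}(f)\;+\;\mult_{C_i}\bigl(\mathrm{div}(\widetilde f),\,\widetilde{S_j}\bigr),
\]
valid for every $m$. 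This gives the easy inclusion immediately, and for the reverse the paper runs a clean dichotomy on $X_M$ with $M=\max_i\ell_i$: either $\mathrm{div}(\widetilde f)\supset C_i$ there (hence on all earlier $X_k$, so the valuation has strictly increased $M$ times and $\val_{E_{i,M}}(f)\ge M\ge\ell_i$), or not (so the remainder term is $0$ and $\val_{E_{i,M}}(f)=\val_{C_i}(\bar f)\ge\ell_i$). This yields the explicit uniform bound $M=\max_i\ell_i$ and makes your Noetherian stabilization step unnecessary. In fact your own formula $\val_{E_{i,m}}(f)=\min_a(\nu_a+ma)$ gives the same bound in one line, since $\nu_a\ge 0$ and $a\ge 1$ force $\nu_a+ma\ge m$.

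Two small corrections to your write-up. The identity ``$\val_{E_{i,m}}=\val_{E_i}+m\cdot\val_{\widetilde{S_j}}$'' is false on non-monomials (e.g.\ $f=x+y$ has both right-hand valuations zero but $\val_{E_{i,1}}(f)=1$); you silently fix this by passing to the $y$-expansion, but the sentence as stated should go. And ``$\val_{E_i}(c_a)+a$ stays the same'' is a slip: the contribution is $\val_{E_i}(c_a)+ma$, and it is precisely the growth in $m$ for $a\ge 1$ that makes the $a=0$ term eventually dominate.
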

\begin{proof}
First, we describe ${\rm val}_{C_i}(\bar{f})$ in terms of the intersection multiplicity $\mult_{C_i}(\ , \ )$ on any $X_m$.
The following lemma is probably essentially known; we give a proof for completeness.

\begin{lemma}\label{lemma} Fix $i\in I$ and say $C_i=E_{i,m}\cap \widetilde{S_j}$ in $X_m$.
We have for any $f \in \calO_{X,o}$  that
$$\val_{C_i}(\bar{f}) = \mult_{C_i} ({\rm div}( \pi^*_mf), \widetilde{S_j}).$$
In particular, the right hand side does not depend on $m$.
\end{lemma}
\begin{proof}
If $f$ vanishes on $S_j$, then both sides are $+\infty$.  So we assume from now on that this is not the case.

Take a point $P$ in $C_i$, and choose coordinates $x,y, z=(z_1, \dots,z_{n-2})$ in a neighbourhood of $P$ in $X_m$, such that, in the local ring $\calO_{X_m,P}$, the zero sets of $x$ and $y$ are  $E_{i,m}$ and $\widetilde{S_j}$, respectively. Say $\pi_m^*(f)=\alpha(x,y,z)$
 in $\calO_{X_m,P}$.

Then we have in the local ring $\calO_{\widetilde{S_j},P}=\calO_{X_m,P}/(y)$ that   $p_j^*(f|_{S_j})=(\pi_m^*(f))|_{\widetilde{S_j}}=\alpha(x,0,z)$. Also, in this local ring $C_i$ is the zero set of $x$.
Then, by definition,  $\val_{C_i}(\bar{f})$ is the unique number $k$ such that we can write $\alpha(x,0,z)=x^k (\beta(z)+x\gamma(x,z))$ in $\calO_{\widetilde{S_j},P}$, with $\beta(z)\neq 0$.

 Let now $H$ be the surface in that neighbourhood given by $z_1=\dots=z_{n-2}=0$.
Then
$$\mult_{P}({\rm div}( \pi^*_mf)\cap H, \widetilde{S_j}\cap H) =
\dim_\C \frac{\calO_{H,P}}{(\alpha(x,y,0),y)}=\dim_\C \frac{\C\{x\}}{\alpha(x,0,0)}=\dim_\C \frac{\C\{x\}}{x^k(\beta(0)+x\gamma(x,0))}.$$
For any point $P_a=(0,0,a_1,\dots,a_{n-2})$ of $C_i$ close to $P$, we can take $(x,y,z_1-a_1,\dots,z_{n-2}-a_{n-2})$ as coordinates around $P_a$.  Then, analogously, with $H_a$ given by $z_1-a_1=\dots=z_{n-2}-a_{n-2}=0$, we have that
$\mult_{P_a}({\rm div}( \pi^*_mf)\cap H_a, \widetilde{S_j}\cap H_a)=k$ outside the proper analytic subset where $\beta(z)=0$. Hence $\mult_{C_i} ({\rm div}( \pi^*_mf), \widetilde{S_j})=k$.

(In the algebraic category, the proof is more conceptual in terms of the local ring of $X_m$ at the generic point of $C_i$.)
\end{proof}


Next, note the following identity of divisors on $X_m$:
$${\rm div}(\pi^*_m\,f)=\val_{E_{i,m}}(f)  E_{i,m}+ {\rm div}(\widetilde{f})+\cdots ,$$
where ${\rm div}(\widetilde{f})$ is the strict transform of ${\rm div}(f)$ and the remaining terms have support not containing $C_i$.  Therefore,
$$\mult_{C_i}({\rm div}( \pi^*_mf), \widetilde{S_j})= \val_{E_{i,m}}(f) + \mult_{C_i}({\rm div}(\widetilde{f}), \widetilde{S_j}) \geq  \val_{E_{i,m}}(f).$$
Hence, by the lemma, we have that
 $$\cF_{I,m}\subset \cG_I \ \ \mbox{for any $m$}.$$
\begin{clm}\label{claim}
For each $\ell \in (\Z_{\geq 0})^{|I|}$, consider $M=M(\ell):= max_{i\in I}  \ell_i$.
Then
$$\cG_I(\ell)\subset \cF_{I,M}(\ell).$$
\end{clm}
\begin{proof}
Take $f\in \cG_I(\ell)$. On $X_M$ we can have two possibilities.

Either ${\rm div}(\widetilde{f})\supset C_i$. Then necessarily also ${\rm div}(\widetilde{f})\supset C_i$ on every $X_k \, (0\leq k < M$), and hence at every blow-up $X_{k+1}\to X_k$ the valuation of $f$ along each $E_{i,k}$ strictly increases. In this case ${\rm val}_{E_{i,M}}(f)\geq M\geq \ell_i$ for all $i$.

Or, ${\rm div}(\widetilde{f})\not\supset C_i$. In this case  $\mult_{C_i}({\rm div}(\widetilde{f}), \widetilde{S_j})=0$, and hence ${\rm val} _{E_{i,M}}(f)=\mult_{C_i}({\rm div}( \pi^*_Mf), \widetilde{S_j})\geq \ell_i$ (by the lemma).
\end{proof}

\vspace{1mm}

This proves part {\it (1)} of the Theorem, and in fact the statement $\lim_m \cF_{I,m}=\cG_I$ too, hence part {\it (2)} is also established.
\end{proof}


\section{Corollaries on Poincar\'e series}\label{cor}
\subsection{}
The filtered algebras can be coded in a multivariable Poincar\'e series, cf. \cite{CDK}, see also e.g.
\cite{CDG,N,NBook}.
For this it is convenient to extend/define the filtrations for any $l\in \mathbb{Z}^{|I|}$
(instead of $\mathbb{Z}_{\geq 0}^{|I|}$). For example,  we extend the filtration
$\mathcal{G}_I$ via $\mathcal{G}_I(l):=\mathcal{G}_I(\max\{(0,0,\dots,0),l\})$. Here for any $l,l'\in\mathbb{Z}^{|I|}$ we set $\max\{l,l'\}:=l''$, where $l''_i=\max\{l_i,l'_i\}$ for every $i$.
For the other filtrations we proceed similarly.

\begin{definition} Denote $\bar{1}:= (1,1,\dots,1)$ in $\mathbb{Z}^{|I|}$.
Assume that all quotients $\cF_{I,m}(\ell)/\cF_{I,m}(\ell+\bar{1})$ and $\overline{\cG_I}(\ell)/\overline{\cG_I}(\ell+\bar{1}) \cong \cG_I(\ell)/\cG_I(\ell+\bar{1}) $ are finite dimensional. Then the Poincar\'e series associated with the filtrations $\cF_{I,m}$ and $\overline{\cG_I}$, say in variables $t=\{t_i\}_{ i\in I}$, are
$$P_{\cF_{I,m}}(t) := \frac{\prod_{i\in I}(t_i -1)}{\prod_{i\in I}t_i -1}\sum_{\ell \in \Z^{|I|}}  \dim \left( \frac{\cF_{I,m}(\ell)}{\cF_{I,m}(\ell+\bar{1})}\right)$$
and
$$P_{\overline{\cG_I}}(t) := \frac{\prod_{i\in I}(t_i -1)}{\prod_{i\in I}t_i -1}\sum_{\ell \in \Z^{|I|}}  \dim \left( \frac{\overline{\cG_{I}}(\ell)}{\overline{\cG_{I}}(\ell+\bar{1})}\right),$$
respectively.
\end{definition}

\begin{corollary} We have that
$$\lim_m P_{\cF_{I,m}}(t) = P_{\overline{\cG_I}}(t).$$
\end{corollary}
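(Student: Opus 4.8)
The plan is to deduce the corollary directly from Theorem \ref{th:1} by a term-by-term comparison of the defining sums. First I would observe that, by Theorem \ref{th:1}(1), for each fixed $\ell\in\Z^{|I|}$ there is an $M_0=M_0(\ell)$ such that $\cF_{I,m}(\ell)=\cG_I(\ell)$ for all $m\geq M_0(\ell)$; here one must use not only the stabilization at $\ell$ but also at $\ell+\bar 1$, so one takes $M_0(\ell):=\max\{M(\ell),M(\ell+\bar 1)\}$ with $M$ as in Theorem \ref{th:1}(1) (equivalently, by Claim \ref{claim}, one may simply take $M_0(\ell)=\max_{i}\max\{\ell_i,\ell_i+1\}=\max_i\ell_i+1$). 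Consequently, for $m\geq M_0(\ell)$,
$$\dim\left(\frac{\cF_{I,m}(\ell)}{\cF_{I,m}(\ell+\bar 1)}\right)=\dim\left(\frac{\cG_I(\ell)}{\cG_I(\ell+\bar 1)}\right)=\dim\left(\frac{\overline{\cG_I}(\ell)}{\overline{\cG_I}(\ell+\bar 1)}\right),$$
the last equality being the isomorphism recorded in the Definition (the lifted filtration $\cG_I$ differs from $\overline{\cG_I}$ only by the ideal $\calI(S)$, which lies in every $\cG_I(\ell)$ for $\ell$ in the relevant range, hence cancels in the quotients). Thus each individual coefficient of the power series stabilizes as $m\to\infty$ to the corresponding coefficient of $P_{\overline{\cG_I}}$.

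Next I would address the sense in which the limit of the series is taken. The cleanest formulation is coefficientwise convergence in $\Z[[t_i^{\pm1}:i\in I]]$ (after clearing the denominator $\frac{\prod_{i\in I}(t_i-1)}{\prod_{i\in I}t_i-1}$, which is a fixed rational factor independent of $m$): by the previous paragraph, for every $\ell$ the coefficient of $\prod_i t_i^{\ell_i}$ in $\sum_\ell\dim(\cF_{I,m}(\ell)/\cF_{I,m}(\ell+\bar 1))\,t^\ell$ equals the coefficient of $\prod_i t_i^{\ell_i}$ in $\sum_\ell\dim(\overline{\cG_I}(\ell)/\overline{\cG_I}(\ell+\bar 1))\,t^\ell$ for all $m\geq M_0(\ell)$, and hence the limit is well defined and equals $P_{\overline{\cG_I}}(t)$. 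One should also note the monotonicity $\cF_{I,m}\subset\cF_{I,m+1}\subset\cdots\subset\cG_I$ from the inclusions recorded after the definition of $\cF_{I,m}$, which shows the sequence of quotient dimensions is eventually constant (not merely convergent along a subsequence) and bounded above by the finite number $\dim(\cG_I(\ell)/\cG_I(\ell+\bar 1))$, so the finiteness hypothesis in the Definition is consistent.

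Finally, if one prefers to read the limit as a limit of rational functions, one would remark that by Theorem \ref{th:1}(1) each $P_{\cF_{I,m}}$ is, on any fixed finite truncation of the exponent lattice, eventually equal to $P_{\overline{\cG_I}}$, so no analytic subtlety arises beyond the coefficientwise statement. The main (and only) real obstacle is bookkeeping: making precise that stabilization at a single $\ell$ does not suffice for the quotient $\cF_{I,m}(\ell)/\cF_{I,m}(\ell+\bar 1)$, so one genuinely needs the uniform bound $M_0(\ell)=\max_i\ell_i+1$ coming from Claim \ref{claim}, together with the identification $\cG_I(\ell)/\cG_I(\ell+\bar 1)\cong\overline{\cG_I}(\ell)/\overline{\cG_I}(\ell+\bar 1)$; everything else is a formal consequence of Theorem \ref{th:1}.
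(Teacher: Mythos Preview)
Your proposal is correct and follows exactly the line the paper intends: the paper states the corollary without proof, treating it as an immediate consequence of Theorem~\ref{th:1} together with the isomorphism $\cG_I(\ell)/\cG_I(\ell+\bar 1)\cong\overline{\cG_I}(\ell)/\overline{\cG_I}(\ell+\bar 1)$ recorded in the Definition. Your write-up simply spells out the implicit details (stabilization at both $\ell$ and $\ell+\bar 1$, coefficientwise meaning of the limit), which is precisely what is needed and nothing more.
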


This is a vast generalization of \cite[Corollary 2]{DGN} about a reduced plane curve germ $S$, consisting of branches $S_1,\dots, S_r$.  Note that their right hand side is the Poincar\'e series of the curve valuations $v_{S_i}(f)$; but we can consider this as a special case of the setting in Section \ref{result} with $(X,o)= (\C^2,o)$. In this dimension $I=\{1,\dots,r\}$ and the $C_i$ are the points $E_{i,m} \cap \widetilde{S_i}$ for all $i$ and all $m$.
By elementary intersection theory, the curve valuation $v_{S_i}(f) = \mult_{o} ({\rm div}(f), S_i)$
 is equal to $\mult_{C_i} ({\rm div}( \pi^*_mf), \widetilde{S_i})$, and hence  $v_{S_i}= \val_{C_i}$ by Lemma \ref{lemma}.

\section{Examples}\label{s:ex}

\subsection{} Usually the computations of the Poincar\'e series are not easy.
Here we provide one example in dimension $\dim (X,o)=2$, where the computation can be done
by a toric method and also by the  general technique of (splice quotient) normal surface singularities.
The second example is a non-toric 3-dimensional example.

Both examples treat one valuation, i.e., $|I|=1$.
\begin{example}\label{ex:1}
Consider the cyclic quotient singularity $X_{5,2}={\rm Spec}(\mathbb{C}[\mathcal{S}])$, where
$\mathcal {S}$ is the semigroup of integral lattice points in the cone
$\sigma^\vee=\mathbb{R}\langle (1,0), (2,5)\rangle$.
(For details regarding cyclic quotient singularities, see e.g. \cite[2.3]{NBook}, whose notations we will adopt.)

 The lattice points $(1,0),\ (1,1),\ (1,2)$ and $ (2,5)$ correspond to generators of the local algebra, these are denoted by $x,\ y,\ z$ and $w$, respectively. They satisfy the relations
 $xz=y^2, \ xw=yz^2$ and $ yw=z^3$. This $X_{5,2}$ is our singularity $(X,o)$ and we fix on it the Weil divisor $(S,o)$ given by $y=z=w=0$.
The  minimal (embedded) resolution $\widetilde{X}_0$ can be read from the cone $\sigma=
 \mathbb{R}\langle (5,-2), (0,1)\rangle$. The semiline   $\mathbb{R}\langle (1,0)\rangle$
 corresponds to the exceptional divisor $E_1=E_{1,0}$ with self-intersection number $-3$,
 the
  semiline   $\mathbb{R}\langle (3,-1)\rangle$
 corresponds to the exceptional divisor $E_2=E_{2,0}$ with self-intersection number $-2$,  and the  semiline   $\mathbb{R}\langle (0,1)\rangle$
 corresponds to the strict transform $\widetilde{S}$ of $S$; it  intersects $E_1$ in the point $C_1$.
 Blowing up $\widetilde{X}_0$ consecutively in $C_1$ corresponds to
 introducing new semilines  in $\sigma$. The new exceptional curve $E_{1,1}$ of the resolution $\widetilde{X}_1$ corresponds to  $\mathbb{R}\langle (1,1)\rangle$, at further  steps $m$ (when we create  $\widetilde{X}_m$) we introduce additionally the
 semiline $\mathbb{R}\langle (1,m)\rangle$. It corresponds to the exceptional curve $E_{1,m}$, satisfying $E_{1,m} \cap \widetilde{S}=\{C_1\}$ in $\widetilde{X}_m$.

 The filtration ${\mathcal F}_{I,m}$ of the local ring  $\mathcal{O}_{X,o}$ is given by the weights of $E_{1,m}$ on the monomials of $(\mathbb{C}[\mathcal{S}])$, namely the weight of $x$ is given by the inner product $((1,m),(1,0))=1$, the weight of $y$ is  $((1,m),(1,1))=1+m$,
  the weight of $z$ is  $((1,m),(1,2))=1+2m$, and the weight of $w$ is  $((1,m),(2,5))=2+5m$.
 Then the Poincar\'e series  associated with the filtration
  ${\mathcal F}_{I,m}$ in $\mathcal{O}_{X,o}$  is the Poincar\'e series associated with this grading  of $\mathbb{C}[\mathcal{S}]$.
  We decompose the lattice points in the cone $\sigma^\vee$ according to their positions in different horizontal lines. Let us consider the first five lines. The monomials sitting right of $(0,0)$ (monomials $1,x,x^2,\ldots$)
  provide $\sum_{k\geq 0}t^k =1/(1-t)$, the monomials sitting right of $(1,1)$ (monomials $y,yx,yx^2,\ldots$)
  provide $\sum_{k\geq 0}t^{1+m} \cdot t^k =t^{1+m}/(1-t)$, the next horizontal line gives  $t^{1+2m}/(1-t)$, then we have
  $t^{2+3m}/(1-t)$ (generated by $(2,3)$) and $t^{2+4m}/(1-t)$ (generated by $(2,4)$.
 Starting from the 6th line, the line of $w$, everything repeats again (by the shift of powers of $t$), hence
  the Poincar\'e series $P_{{\mathcal F}_{I,m}}(t)$ is
  $$P_{{\mathcal F}_{I,m}}(t)=\frac{1+t^{1+m}+t^{1+2m}+t^{2+3m}+t^{2+4m}}{(1-t)(1-t^{2+5m})}.$$
   Its limit is $1/(1-t)$, which is indeed the (intrinsic) Poincar\'e series of the smooth
   curve germ $(S,o)$.

   In fact,   $P_{{\mathcal F}_{I,m}}(t)$ can be computed using multivariable topological Poincar\'e series as well.
   Let us fix the resolution $\widetilde{X}_m$. In this situation we can consider the two--variable
     (analytic) Poincar\'e series $P_o$ associated with the divisorial filtration given by
    the exceptional curves $E_{1,m}$ and $E_{2,m}$.
In this case it
   equals the  two-variable topological Poincar\'e series  $Z_o$ (see e.g. Theorem 8.5.16 in \cite{NBook}).
   This  two--variable  topological Poincar\'e series  (cf. \cite[8.4]{NBook}), with variables
   $(t,s)$ corresponding to $E_{1,m}$ and $E_{2,m}$, respectively, is
   \begin{equation*}\begin{split}Z_o(t,s)&=
   \frac{1}{(1-t^{(2+5m)/5}s^{1/5})(1-t^{1/5}s^{3/5})}\\ &=
   \frac{(1+t^{(2+5m)/5}s^{1/5}+\ldots + t^{4(2+5m)/5}s^{4/5})( 1+t^{1/5}s^{3/5}+\ldots+t^{4/5}s^{12/5}) }{(1-t^{2+5m}s)(1-ts^3)}.\end{split}\end{equation*}
   Then $P_{{\mathcal F}_{I,m}}(t)$ is obtained from $Z_o(t,s)$ in two steps. First, we consider that subseries
    $Z$ which has all monomials with integral exponents, then we substitute $s=1$, finally obtaining the above  $P_{{\mathcal F}_{I,m}}(t)$.

 \begin{example}\label{ex:2}
 Take $(X,o)=({\mathbb {C}}^3,o)$ and let $(S,o)$ be given by the equation
 $xy-z^2=0$. Let the first embedded  resolution ${X}_0$ be the blow-up of $(X,o)$ at the origin. Then $\widetilde{S}$ is the minimal resolution of $(S,o)$ with one exceptional curve $C$, which is
 a rational $(-2)$-curve on $\widetilde{S}$. In our construction we blow up this curve consecutively, obtaining in this way the embedded resolutions $\{{X}_m\}_{m\geq 1}$.

 The Poincar\'e series $P_{\overline{{\mathcal {G}_I}}}(t)$ is easy, it is the Poincar\'e series of the homogeneous degree 2 hypersurface associated with its weights $(1,1,1)$, hence
  $$P_{\overline{{\mathcal {G}_I}}}(t)=\frac{1-t^2}{(1-t)^3}.$$
  The Poincar\'e series  $P_{{\mathcal F}_{I,m}}(t)$ of the filtrations on ${\mathcal O}_{{\mathbb C}^3,0}=\mathbb{C}\{x,y,z\}$ can be obtained as follows.
  Rewrite any local series  $f(x,y,z)$ in a unique way in the form
  $\sum a_{i,j,k,l} x^iy^jz^k(xy-z^2)^l$, where $i\geq 0$, $j\geq 0$, $k\in \{0,1\}$, and $l\geq 0$.
  We will analyse the functions $x,y,z,(xy-z^2)$ 
  along the consecutive steps of the blow-up.

  The first blow-up is given by $x=u$, $y=uv$, $z=uw$ (it is enough to consider only this chart). Then the exceptional surface $E_0\subset X_0$ is given by $u=0$, in which $C$ is given by $u= v-w^2=0$. We perform the change of variables
 $\tilde{v}=v-w^2$; in these new coordinates, we have that
  $x=u$, $y=u(\tilde{v}+w^2)$, $z=uw$ and $xy-z^2=u^2\tilde{v}$. The curve $C$ is now $\{u=\tilde{v}=0\}$.
  Then the next blow-up (in the relevant chart) is given by $u=u_1$, $w=w_1$ and $\tilde{v}=u_1\tilde{v}_1$; hence in this coordinate chart we have   $x=u_1$, $y=u_1(u_1\tilde{v}_1+w_1^2)$, $z=u_1w_1$ and $xy-z^2=u_1^3\tilde{v}_1$. And the curve $C$ is given in $X_1$ by $\{u_1=\tilde{v}_1=0\}$.
  By induction, in the relevant  chart of $X_m$, with coordinates
  $u_{m}$,  $\tilde{v}_{m}$, $w_{m}$, the new exceptional surface $E_m$ and the curve $C$ are  $\{u_m=0\}$ and $\{u_m=\tilde{v}_m=0\}$, respectively. And the
  pullbacks of the functions are
   $x=u_{m}$, $y=u_{m}(u_{m}^m\tilde{v}_{m}+w_{m}^2)$,
   $z=u_{m}w_{m}$ and $xy-z^2=u_{m}^{m+2}\tilde{v}_{m}$.
   Note that then the pullback of  $x^iy^jz^k(xy-z^2)^l$
   is $u_{m}^{i+j+k+(m+2)l} (u_{m}^m\tilde{v}_{m}+w_{m}^2)^j
   w_{m}^k\tilde{v}_{m}^l$.
   In particular, an expression  of type  $x^iy^jz^k(xy-z^2)^l$ has $E_m$-valuation $i+j+k+(2+m)l$.

   For any fixed $\ell\geq 0$, we claim that the classes of the functions   $x^iy^jz^k(xy-z^2)^l$, where $i,j,l\geq 0$ and $k\in\{0,1\}$ and
   $i+j+k+(2+m)l=\ell$, form  a basis of ${\mathcal F_{I,m}}(\ell)/ {\mathcal F_{I,m}}(\ell+1)$.
   First,  we show that their classes  are linearly independent.
    For this, consider a linear combination
    $\sum c_{i,j,k,l} x^iy^jz^k(xy-z^2)^l$, and assume that it belongs to  ${\mathcal F_{I,m}}(\ell+1)$. This means that
    $$\sum c_{i,j,k,l}(u_{m}^m\tilde{v}_{m}+w_{m}^2)^j
   w_{m}^k\tilde{v}_{m}^l \equiv 0 \ ({\rm mod}\ u_{m})$$
  or, equivalently, that
 $$ \sum c_{i,j,k,l}    w_{m}^{2j+k}\tilde{v}_{m}^l    \equiv 0 \ \ (\mbox{in ${\mathbb C}[\tilde{v}_{m},w_{m}]$}).$$
   Here the summation is over $i,j,l\geq 0$, $k\in \{0,1\}$ and $i+j+k+(m+2)l=\ell$, for $\ell$ fixed.
   Since $\tilde{v}_{m}$ and $w_{m}$ are free coordinates, and because  $k\in\{0,1\}$, the expression determines
   uniquely  $j$,$k$ and $l$. And since $\ell$ is fixed, also $i$ is determined. We conclude that there are no cancellations, and hence $c_{i,j,k,l}=0$ for all $i,j,k,l$.

    This computation also shows that the functions   $x^iy^jz^k(xy-z^2)^l$, where $i,j,l\geq 0$ and $k\in\{0,1\}$ and
   $i+j+k+(2+m)l=\ell$, generate  ${\mathcal F_{I,m}}(\ell)/ {\mathcal F_{I,m}}(\ell+1)$.
   Indeed, by a similar argument as above, any combination of type
    $\sum c_{i,j,k,l} x^iy^jz^k(xy-z^2)^l$, with $i+j+k+(m+2)l<\ell $ cannot be an element of
    ${\mathcal F_{I,m}}(\ell)$.

By  this discussion the wished Poincar\'e series
    coincides with the Poincar\'e series of the local algebra $\mathbb{C}\{x,y,z,w\}/(z^2)$,
  weighted so that the weights of $x$, $y$ and $z$ are 1 and the weight of $w$ is $m+2$.
   Therefore, \begin{equation}\label{eq:NEW}P_{{\mathcal F}_{I,m}}(t)=\frac{1-t^2}{(1-t)^3(1-t^{m+2})}.\end{equation}

 \noindent  In a different way,  we can also argue as follows. Consider the exact sequence
  $$0\to {\mathbb C}\{x,y,z\}\stackrel{\cdot (xy-z^2)}{\longrightarrow}
  {\mathbb C}\{x,y,z\}\to {\mathbb C}\{x,y,z\} /(xy-z^2)\to 0.$$
By the arguments above, it induces more precisely an exact sequence of filtrations;
 we have for all $\ell$ the exact sequence
 $$0\to {\mathcal F_{I,m}}(\ell-m-2)\stackrel{\cdot (xy-z^2)}{\longrightarrow}
  {\mathcal F_{I,m}}(\ell)\to ({\mathbb C}\{x,y,z\} /(xy-z^2))(\ell)\to 0,$$
where on the right term we mean the filtration induced by the monomials of type
$x^iy^jz^k$, where $i,j\geq 0$ and $k\in\{0,1\}$ and
   $i+j+k=\ell$. This coincides with the graded  local algebra $\mathbb{C}\{x,y,z\}/(z^2)$,
  weighted so that the weights of $x$, $y$ and $z$ are 1. Thus
  the Poincar\'e series of ${\mathbb C}\{x,y,z\} /(xy-z^2)$ is
  $(1-t^2)/(1-t)^3$. From these exact sequences we thus obtain
 $$ P_{{\mathcal F}_{I,m}}(t)= P_{{\mathcal F}_{I,m}}(t)\cdot t^{m+2}+ (1-t^2)/(1-t)^3,$$
 which gives (\ref{eq:NEW}) again.

  The limit property can be seen in this case as well.

 \end{example}

\end{example}

\end{document}